\newtheorem{theorem}{Theorem}[section]
\newtheorem{lemma}[theorem]{Lemma}
\theoremstyle{definition}
\newtheorem*{definition}{Definition}
\newtheorem{remark}{Remark}
\numberwithin{equation}{section}
\newcommand{\Z}{\mathbb{Z}}
\newcommand{\N}{\mathbb{N}}
\newcommand{\R}{\mathbb{R}}
\newcommand{\Pl}{\mathcal{P}}
\begin{document}

\baselineskip=17pt

\title{On algebraic integers in short intervals\\
	and near  smooth curves}
\author{Friedrich G{\"o}tze\\
University of Bielefeld\\
Bielefeld, Germany\\
E-mail: goetze@math.uni-bielefeld.de
\and
Anna Gusakova\\
University of Bielefeld\\
Bielefeld, Germany\\
E-mail: agusakov@math.uni-bielefeld.de}

\date{}

\maketitle

\renewcommand{\thefootnote}{}

\footnote{2010 \emph{Mathematics Subject Classification}: Primary 11J13; Secondary 11J83, 11R04}

\footnote{\emph{Key words and phrases}: algebraic integers, algebraically conjugate integers, geometry of numbers.}

\footnote{Supported by SFB-701, Bielefeld University (Germany).}

\renewcommand{\thefootnote}{\arabic{footnote}}
\setcounter{footnote}{0}

\begin{abstract}
In 1970 A. Baker and W. Schmidt introduced regular systems of numbers and vectors, showing that the set of real algebraic numbers forms a regular system on any fixed interval. This fact was used to prove several important results in the metric theory of transcendental numbers. In this paper the concept of a regular system is applied  to the set of  algebraic integers $\alpha$ of height $\leq Q$ in intervals of length depending on $Q$.
\end{abstract}

\section{Introduction}

Many problems in the theory of Diophantine approximation are related to the distribution of algebraic numbers and algebraic integers \cite{Cassels57, Schmidt80}. In this paper we wish to investigate the distribution of algebraic integers on the real line and the
distribution of the points with algebraically conjugate integer coordinates in the Euclidean plane.

Let $P(t)=a_nt^n+\ldots +a_1t+a_0$, $a_i\in\Z$ be a polynomial with integer coefficients of degree $\deg P = n$. By the height of polynomial $P$ we mean the value $H(P)=\max\limits_{0\leq j\leq n}{|a_j|}$.

Let us consider an irreducible polynomial $P\in\Z[t]$ with coprime coefficients. The roots of this polynomial are algebraic numbers $\alpha$ of degree $n$ and height $H(\alpha)=H(P)$. When $a_n=1$, the roots of the polynomial $P(t) = t^n+a_{n-1}t^{n-1}+\ldots+a_1t+a_0$ are called algebraic integers $\alpha$ of degree $n$ and height $H(\alpha)=H(P)$. Let $\#\,S$ denote the cardinality of a finite set $S$ and $\mu_k\, D$ denote the Lebesgue measure of a measurable set $D\subset \R^k$, $k\in\N$. We define the following class of polynomials:
\[
\Pl_n(Q)=\left\{ P\in\Z[t]:\, \deg P\leq n,\, H(P)\leq Q\right\}.
\]
We emphasize that we restrict our attention to the case when $Q>Q_0$ is a sufficiently large integer. Furthermore, we will denote by $c_j>0$, $j\in \N$ positive real numbers independent of $H(P)$ and $Q$.

The first part of this paper is devoted to the study of one-dimensional case, namely algebraic integers. Over the last 20 years, new results providing a deeper insights into the distribution of algebraic numbers have been obtained. In particular, lower and upper bounds for the distances between algebraically conjugate numbers and the roots of different integer polynomials were obtained in the papers \cite{BeresnevichBernikGoetze10, BudarinaGoetze13, BugeaudMignotte04, Evertse04}.

Let us consider an interval $I\subset\left[-\frac12;\frac12\right]$ of length $|I|=c_1 Q^{-1}$.
It is of interest to know whether an interval  $I$  of this type contains  algebraic numbers $\alpha$ of degree $\deg\alpha\leq n$ and height $H(\alpha)\leq Q$. In case of positive answer we are also interested in finding lower bound for the number of such algebraic $\alpha\in I$. These problems were solved for $n=3$ in the paper of V.\,Bernik, N.\,Budarina and H.\,O'Donnell \cite{BerBudOdonn12} and a general result for an arbitrary $n$ was proved by V.\,Bernik and F.\,G\"otze \cite{BernikGoetze15}. The result of the paper \cite{BernikGoetze15} states that for any integer $Q\ge 1$ there exists an interval $I$ of length $|I|=\frac12\, Q^{-1}$, which doesn't contain any algebraic number $\alpha$ of an arbitrary degree and height $H(\alpha)\leq Q$. On the other hand, for $Q>Q_0$ sufficiently large any interval $I$ of length $|I| \ge c_1\, Q^{-1}$ contains at least $c_2\,Q^{n+1}|I|$ real algebraic numbers $\alpha$ of degree $\deg\alpha\leq n$ and height $H(\alpha)\leq Q$. Furthermore, these algebraic numbers form a regular system \cite{Bugeaud04}.

Our purpose is to obtain a similar result in the case of algebraic integers.

\begin{theorem}\label{theorem0}
For any integer $Q\ge 1$ there exists an interval $I$ of length $|I|= \frac 12\cdot Q^{-1}$ which doesn't contain algebraic integers $\alpha$ of height $H(\alpha)\leq Q$ and arbitrary degree $n\ge 2$.
\end{theorem}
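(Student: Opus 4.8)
The plan is to exhibit one explicit interval sitting against the origin and to show directly that it avoids all the algebraic integers in question. I would take
\[
I=\left(0,\tfrac{1}{2Q}\right),
\]
an interval of length $\tfrac12 Q^{-1}$ contained in $(0,1)$; in particular $I$ contains no rational integer, so only algebraic integers of degree $\ge 2$ need be ruled out. The guiding idea is that a nonzero algebraic integer cannot be very close to $0$ without its minimal polynomial having an abnormally small constant term: if $P(t)=t^n+a_{n-1}t^{n-1}+\dots+a_1t+a_0$ is that polynomial, then irreducibility forces $a_0\neq 0$ as soon as $\deg P\ge 2$, hence $|a_0|\ge 1$, whereas the relation $P(\alpha)=0$ writes $a_0$ as a combination of small positive powers of $\alpha$, which is tiny when $\alpha$ is.

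Carrying this out: suppose $\alpha\in I$ is an algebraic integer with $H(\alpha)\le Q$, and let $P$ be its minimal polynomial, monic with integer coefficients, of degree $n\ge 2$ and height $H(P)\le Q$. By irreducibility $a_0=P(0)\neq 0$, so $|a_0|\ge 1$. From $P(\alpha)=0$ we get $a_0=-\bigl(\alpha^n+a_{n-1}\alpha^{n-1}+\dots+a_1\alpha\bigr)$, and since $|a_j|\le Q$ for $1\le j\le n$ (this includes $|a_n|=1\le Q$) and $\alpha>0$,
\[
|a_0|\le Q\sum_{j=1}^{n}\alpha^{j}<Q\cdot\frac{\alpha}{1-\alpha}\le Q\cdot\frac{1/(2Q)}{1-1/(2Q)}=\frac{Q}{2Q-1}\le 1,
\]
where the second inequality sums the geometric series, the third uses that $t\mapsto t/(1-t)$ is increasing on $(0,1)$ together with $\alpha<\tfrac1{2Q}$, and the last holds because $Q\ge 1$. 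Thus $|a_0|<1$, contradicting $|a_0|\ge 1$, so $I$ contains no algebraic integer of height $\le Q$ and degree $\ge 2$ — and, being inside $(0,1)$, none of degree $1$ either.

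I do not expect a genuine obstacle here: the entire proof is the one-line estimate above, and its decisive feature is that the bound $|a_0|<Q\alpha/(1-\alpha)$ does not involve the degree $n$ at all, which is exactly what lets a single interval work simultaneously for every $n\ge 2$. The only point needing a little care is the boundary case $Q=1$, where $\tfrac1{2Q}=\tfrac12$ and $\tfrac{Q}{2Q-1}=1$, so the displayed chain ends in an equality rather than a strict inequality; but the estimate $|a_0|<Q\alpha/(1-\alpha)$ is already strict, since the partial geometric sum is strictly below $\tfrac{\alpha}{1-\alpha}$, and $|a_0|<1$ still follows. If a closed interval is preferred, one may instead take $I=\bigl[\tfrac1{Q+1}-\tfrac1{2Q},\ \tfrac1{Q+1}\bigr]\subset(0,1)$ for $Q\ge 2$ — the same computation in fact excludes algebraic integers of degree $\ge 2$ throughout $\bigl(0,\tfrac1{Q+1}\bigr]$ — and handle $Q=1$ with $I=(-\tfrac12,0)$, using that $\alpha\mapsto-\alpha$ preserves degree and height.
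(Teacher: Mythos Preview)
Your argument is correct. The explicit interval $I=(0,\tfrac1{2Q})$ works, and the estimate $|a_0|\le Q\sum_{j\ge 1}\alpha^j<Q\alpha/(1-\alpha)<1$ is clean and degree-independent; the irreducibility of the minimal polynomial in degree $\ge 2$ indeed forces $a_0\neq 0$, so $|a_0|\ge 1$ gives the contradiction. Your discussion of the boundary case $Q=1$ and of closed versus open intervals is also fine, though unnecessary: the strict inequality $\sum_{j=1}^n\alpha^j<\alpha/(1-\alpha)$ already yields $|a_0|<1$ regardless.

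The paper takes a different and much shorter route: it simply observes that algebraic integers of height $\le Q$ are in particular algebraic \emph{numbers} of height $\le Q$ (the monic minimal polynomial is already primitive, so the two notions of height agree), and then invokes the result of \cite{BernikGoetze15} that there is an interval of length $\tfrac12 Q^{-1}$ free of algebraic numbers of height $\le Q$. So the paper's proof is a one-line reduction to a cited theorem, while yours is a self-contained elementary construction. What your approach buys is an explicit interval and independence from the external reference; what it gives up is that your argument genuinely uses the monic leading coefficient (to get $|a_n|=1\le Q$ and hence the uniform bound $|a_j|\le Q$ for all $j\ge 1$), so it does not immediately recover the stronger statement for arbitrary algebraic numbers that the paper is quoting.
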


It is easily seen that Theorem \ref{theorem0} follows from the results presented in
\cite{BernikGoetze15}, since algebraic integers form a subset of the set of  algebraic numbers.

\begin{theorem}\label{theorem1}
Let the constant $c_3$ and the number $Q>Q_0$ be sufficiently large. Then there exists a constant $c_4$ such that any interval $I$ of length $|I|= c_3\, Q^{-1}$ contains at least $c_4\,Q^n|I|$ real algebraic integers $\alpha$ of degree $\deg\alpha= n$, $n\ge 2$ and height $H(\alpha)\leq Q$.
\end{theorem}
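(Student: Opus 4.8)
\noindent\emph{Proof strategy.} The plan is to localize and then construct polynomials. Dividing $I$ into $\asymp |I|Q$ consecutive subintervals of a fixed length $c'Q^{-1}$ (possible since $|I|\ge c_3Q^{-1}$ and we may take $c_3\ge c'$), it suffices to prove that every such subinterval $J$, $|J|=c'Q^{-1}$, contains $\gg Q^{n-1}$ real algebraic integers of degree $n$ and height at most $Q$; summing over the subintervals then gives $\gg |I|Q\cdot Q^{n-1}=|I|Q^{n}$, which is the asserted bound after renaming the constant. So fix such a $J$ and choose a base point $x_0\in J$, taken away from $0$ when $0\in J$ (possible because $c'$ is large). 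The goal becomes: produce $\gg Q^{n-1}$ monic $P\in\Z[t]$ with $\deg P=n$, $H(P)\le Q$, each having a simple real root $\alpha_P\in J$ at which $|P'(\alpha_P)|\asymp Q$, and then verify that these roots are distinct algebraic integers of degree exactly $n$.

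For the construction I would count integer vectors $(a_0,\dots,a_{n-1})$ subject to $|P(x_0)|\le c_7$, $c_5Q\le |P'(x_0)|\le c_6Q$ and $|a_j|\le Q$, where $P(t)=t^n+a_{n-1}t^{n-1}+\cdots+a_0$ and $c_5,c_6,c_7$ are suitable constants. Since the map $(a_0,\dots,a_{n-1})\mapsto\bigl(P(x_0),P'(x_0),a_2,\dots,a_{n-1}\bigr)$ differs from an invertible integer linear map of determinant $1$ only by the fixed translation coming from the monic term $x_0^{n}$, a Minkowski-type lattice-point count in this box yields $\gg Q^{n-1}$ admissible vectors. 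For each of them, a quantitative Newton/intermediate-value estimate — using that $|P''|,\dots,|P^{(n)}|\ll Q$ on $J$ together with $|P(x_0)|\le c_7$ and $|P'(x_0)|\ge c_5Q$ — produces a genuine real root $\alpha_P$ with $|\alpha_P-x_0|\le 2c_7/(c_5Q)$, hence $\alpha_P\in J$ once $c'>4c_7/c_5$; moreover $|P'(\alpha_P)|\asymp Q$, and since $|P'(\alpha_P)|\asymp Q$ while $|P''|\ll Q$ on $J$, every other real root of $P$ lies at distance $\gg 1$ from $\alpha_P$, so $\alpha_P$ is the unique root of $P$ in $J$.

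It then remains to extract $\gg Q^{n-1}$ distinct degree-$n$ algebraic integers. If two of the constructed polynomials $P\ne\widetilde P$ satisfied $\alpha_P=\alpha_{\widetilde P}$, they would share a common factor of positive degree, which — both being monic of degree $n$ — forces $\alpha_P$ to have degree $\le n-1$ and $P$ to be reducible over $\Q$; conversely, on the reducible-free part of the family each $\alpha_P$ has degree exactly $n$ and $P\mapsto\alpha_P$ is injective. So one must bound the number of reducible polynomials in the family by, say, half of its size. A reducible such $P$ has a monic factor $m$, $1\le\deg m\le n-1$, whose root is $\alpha_P$, hence with a root in $J$, together with a complementary monic cofactor $g$; using Mahler's factorization inequality $H(m)H(g)\asymp H(mg)\le Q$ honestly, the crude count of monic polynomials of a given degree and height having a root in a subinterval of length $\asymp Q^{-1}$, and — in the delicate case $\deg m=1$, $m=t$ (only relevant when $0\in J$) — the fact that the base point was chosen with $|x_0|\asymp Q^{-1}\ne 0$, so that the constraint $|P(x_0)|\le c_7$ discards all but a fraction $\asymp c_7/c'$ of the polynomials divisible by $t$, one finds the reducible part has size $o(Q^{n-1})+(\mathrm{const}/c')\,Q^{n-1}$, which is less than half the family once $c'$ (hence $c_3$) is large enough. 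Removing these leaves $\gg Q^{n-1}$ distinct real algebraic integers of degree $n$ and height $\le Q$ in $J$, completing the reduction.

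The step I expect to be the main obstacle is making the construction of the previous two paragraphs uniform in the position of $J$ inside $\bigl[-\tfrac12,\tfrac12\bigr]$ and over all admissible scales $|I|\ge c_3Q^{-1}$. When the powers $1,x_0,\dots,x_0^{n-1}$ are abnormally close, relative to $Q$, to a rational hyperplane — the extreme instance being $x_0$ near $0$, where $|P(x_0)|\le c_7$ nearly collapses to $|a_0|\le c_7$ and makes many constructed polynomials divisible by $t$ — the lattice-point count in the box degenerates and the reducible subfamily swells. Controlling these degenerate locations while keeping every constant independent of $Q$ (by a case split, by exploiting the freedom in choosing $x_0$ within $J$, or by averaging the base point) is where essentially all of the technical work sits; the remainder is bookkeeping with Minkowski's theorem and elementary estimates on roots of polynomials.
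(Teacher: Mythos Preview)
Your strategy is genuinely different from the paper's. The paper does \emph{not} count polynomials with prescribed value and derivative at a base point and then subtract reducibles. Instead it first invokes Lemma~\ref{lm_BeGo} (a metric result of Bernik--G\"otze) to show that the set $\mathcal{L}_{n-1}\subset I$ of points $x_0$ at which some $P\in\mathcal{P}_{n-1}(Q)$ has simultaneously $|P(x_0)|<Q^{-n+1}$ and $|P'(x_0)|<\delta_0 Q$ has measure at most $\tfrac14|I|$. For each $x_0$ in the complement $B_1$ the first successive minimum of the obvious convex body exceeds $\delta_0$, so by Minkowski's second theorem $\tau_{n-1}\le\delta_0^{-n+1}$, yielding $n$ linearly independent $P_1,\dots,P_n\in\mathcal{P}_{n-1}$ with controlled value, derivative and height. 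The paper then chooses (via Bertrand's postulate) a prime $p\in(n!,2n!)$ not dividing $\det(a_{i,j})$, solves a linear system for real $\theta_i$, and rounds to integers $t_i$ so that $P(t)=t^n+p\sum t_iP_i(t)$ has $|P(x_0)|\asymp Q^{-n+1}$, $|P'(x_0)|\asymp Q$, $H(P)\ll Q$, and satisfies Eisenstein's criterion at $p$ --- hence is \emph{automatically} irreducible of degree $n$. This gives one real algebraic integer $\alpha$ with $|x_0-\alpha|\ll Q^{-n}$ for every $x_0\in B_1$, and a packing (maximal separated set) argument converts this into $\gg Q^n|I|$ distinct algebraic integers.

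The two obstacles you single out --- uniformity in the position of $J$ and control of the reducible subfamily --- are exactly what the paper's machinery absorbs: the degenerate base points are the set $\mathcal{L}_{n-1}$ removed by Lemma~\ref{lm_BeGo}, and irreducibility is forced constructively via Eisenstein rather than obtained by subtraction. Your route, if it can be completed, would be more self-contained in that it avoids the black-box Lemma~\ref{lm_BeGo}; but the reducibility step is not yet a proof. For factors of degree $\ge 2$ you need an upper bound on algebraic integers of degree $k<n$ and height $\ll Q$ lying in $J$, combined with a count of admissible cofactors $g$ such that $mg$ still meets your constraints at $x_0$, and making this $o(Q^{n-1})$ \emph{uniformly} in $x_0$ is essentially the same difficulty you already isolate. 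A minor slip worth noting: the map $(a_0,\dots,a_{n-1})\mapsto(P(x_0),P'(x_0),a_2,\dots,a_{n-1})$ has determinant $1$ but is not an \emph{integer} linear map (its off-diagonal entries are powers of $x_0$); the count of $\gg Q^{n-1}$ vectors is nonetheless correct, obtained simply by fixing $a_2,\dots,a_{n-1}$, then $a_1$, then $a_0$ in turn, rather than by any genuine lattice-point theorem.
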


\begin{remark}
It should be mentioned that condition $I\subset\left[-\frac12;\frac12\right]$ is not essential to the proof and can be dropped (see \cite{Beresnevich99, Bugeaud03} for more details).
\end{remark}
\begin{remark}
Another way to stating Theorem \ref{theorem1} is to say that the set of real algebraic integers of degree $n$ forms a regular system.
\end{remark}

\begin{definition}
Let $\Gamma$ be a countable set of real numbers and
$N:\Gamma\to \R^{+}$ be a positive-valued function. The pair $(\Gamma, N)$ is called a \emph{regular system} if there exists a constant $c_5 = c_5(\Gamma,N)>0$ such that for every interval $I\subset\R$ the following property is satisfied: for a sufficiently large number $T_0=T_0(\Gamma,N,I)>0$ and an arbitrary integer $T>T_0$ there exist $\gamma_1,\gamma_2,\ldots,\gamma_t \in \Gamma\cap I$ satisfying
\begin{align*}
1)\qquad&N(\gamma_i)\le T, \qquad 1\le i\le t,\\
2)\qquad&|\gamma_i-\gamma_j| > T^{-1}, \qquad 1\le i<j\le t,\\
3)\qquad&t > c_5\,T|I|.
\end{align*}
\end{definition}

A simple example of a regular system is the set of non-zero rational numbers $p/q$ together with the function 	$N(p/q):=q^{2}$. Similarly, the set of real algebraic numbers $\alpha$ of degree $n$ forms a regular system with respect to the function $N(\alpha)=\left(\frac{H(\alpha)}{\left(1+|\alpha|\right)^n}\right)^{n+1}$ and the set of real algebraic integers $\alpha$ of degree $n$ forms a regular system with respect to the function $N(\alpha)=\left(\frac{H(\alpha)}{\left(1+|\alpha|\right)^{n-1}}\right)^{n}$ (see \cite{BakerSchmidt70, Beresnevich99, Bugeaud03}). The interest of Theorem \ref{theorem1} is that in contrast to the result \cite{Bugeaud03} it allows one to clarify the relation between parameter $T_0$ and the length of the interval $I$.

We emphasize that the results mentioned above yield many interesting applications. For example, regular systems of algebraic numbers are used to obtain lower bounds for the Hausdorff dimension of various algebraic number sets \cite{BakerSchmidt70, DickinsonDodson} and to prove Khinchine-type theorems in the case of divergence \cite{Beresnevich99, BudDikBer, BudZor}.

In the second part of our paper we proceed with the study of two-dimensional analogue of Theorem \ref{theorem1}. An interesting result related to the distribution of points with algebraically conjugate coordinates in the Euclidean plane was obtained in the papers \cite{BernikGoetzeKukso14, BernikGoetzeGusakova16}. Let us consider a rectangle $E = I_1\times I_2$, where $I_1$, $I_2$ are
intervals of lengths $|I_1| = Q^{-s_1}$, $|I_2| = Q^{-s_2}$ for $0<s_1+s_2<1$.
Furthermore, from now on we make the assumption:
\[
E\cap \left\{(x,y)\in\R^2:\quad |x-y|\leq \varepsilon\right\}=\varnothing,
\]
where $\varepsilon > 0$ is a sufficiently small constant. Since the distance between algebraically conjugate numbers is bounded below \cite{BugeaudMignotte04, Evertse04}, this condition is not particularly restrictive, but it will simplify our argument. We call a point $(\alpha,\beta)$ an {\it algebraic point} if $\alpha$ and $\beta$ are algebraically conjugate numbers, and an {\it algebraic integer point} if $\alpha$ and $\beta$ are algebraically conjugate integers. In the paper \cite{BernikGoetzeGusakova16} it is shown that for $Q>Q_0$ any rectangle $E$ of size $\mu_2\,E = Q^{-s_1-s_2}$, $0<s_1+s_2<1$ contains at least $c_6\,Q^{n+1}\mu_2\, E$ algebraic points $(\alpha, \beta)$ of degree $\deg\alpha=\deg\beta \leq n$, $n\ge 2$ and height $H(\alpha)=H(\beta)\leq Q$.

We prove that the similar estimate holds in case of algebraic integer points.

\begin{theorem}\label{theorem2}
For any rectangle $E= I_1\times I_2$ of size $\mu_2\,E=|I_1|\cdot|I_2|=Q^{-s_1-s_2}$, $0<s_1+s_2<1$ there exists a constant $c_7$ such that rectangle $E$ contains at least $c_7\,Q^n\mu_2 E$  algebraic integer points $(\alpha, \beta)$ of  degree $\deg\alpha=\deg\beta = n$, $n\ge 4$ and height $H(\alpha)=H(\beta)\leq Q$ for $Q>Q_0$.
\end{theorem}

\begin{remark}
It should be noted, that the position of the rectangle $E$ is assumed to be fixed, namely the midpoint $(d_1,d_2)$ of the rectangle $E$ is independent of $Q$. Therefore, the values $c_7$ and $Q_0$ may depend on $d_1$ and $d_2$.
\end{remark}

This theorem deals with simple figure like rectangle, but it allows one to obtain the analogous estimates in the case of more complicated shapes. In particular, a number of interesting problems arise when distribution of algebraic points in a certain neighborhoods of smooth curves is investigated \cite{Huxley}. Let us mention several recent results in this area. Upper and lower bounds of the same order for the number of rational points near smooth curves have been obtained in the papers \cite{BereDickVel} and \cite{VaugVel}. The paper \cite{BernikGoetzeKukso14} from 2014 presents lower estimate for the number of algebraic points of arbitrary degree in neighborhoods of smooth curves.

Our main theorem is a restatement of the results of the paper \cite{BernikGoetzeKukso14} in terms of algebraic integers.

\begin{theorem}
Let $y=f(x)$ be a continuous differentiable function on an interval $J=[a,b]$ such that $\sup\limits_{x\in J}{|f'(x)|}<\infty$. Denote by $L_J(Q,\lambda)$ the following set:
\[
L_J(Q,\lambda)=\left\{(x,y)\in\R^2:x_1\in J, \left|y-f(x)\right|<c_8\,Q^{-\lambda}\right\},
\]
for $0<\lambda<\frac12$. Then for $Q>Q_0(n,J,f,\lambda)$ there exists a constant $c_9$ such that the set $L_J(Q,\lambda)$ contains at least $c_9\,Q^{n-\lambda}$ algebraic integer points $(\alpha,\beta)$ of degree $\deg\alpha=\deg\beta = n$, $n\ge 4$ and of height $H(\alpha)=H(\beta)\leq Q$.
\end{theorem}

\begin{proof}
We give only the main ideas of the proof. For more details we refer the reader to \cite{BernikGoetzeGusakova16}.

Let us consider a graph of the function $y=f(x)$ and the strip $L_J(Q,\lambda)$ for a fixed $0<\lambda<\frac12$. Divide the strip $L_J(Q,\lambda)$ into segments
\[
T_i=\left\{(x,y)\in\R^2:x\in J_i,\quad |y-f(x)|\leq Q^{-\lambda}\right\},
\]
where $J_i=[x_{i-1},x_{i}]$, $x_i=x_{i-1}+c_{10}\,Q^{-\lambda}$, $x_0=a$ and $1\leq i\leq m$. It is easy to check that $m >c_{11}\,Q^{\lambda}$ for $Q>Q_0$. Let $\bar{f}_i=\frac12\cdot\left(\max\limits_{x\in J_i}{f(x)}+\min\limits_{x\in J_i}{f(x)}\right)$. Consider the rectangles
\[
E_i=\left\{(x,y)\in\R^2: x\in J_i,\quad \left|y-\bar{f}_i\right|\leq c_{12}\,Q^{-\lambda}\right\},
\]
where $c_{12}$ are so chosen that $E_i\subset T_i$.

From Theorem \ref{theorem2} it follows that every rectangle $E_i$, $i=\overline{1,m}$ contains at least $c_{13}\,Q^{n-2\lambda}$ algebraic integer points of degree $n$ and height at most $Q$. Since $m>c_{11}\,Q^{\lambda}$, there must be at least $c_9\,Q^{n-\lambda}$ algebraic integer points $(\alpha,\beta)\in L_J(Q,\lambda)$.
\end{proof}

\section{Auxiliary statements}

In this section we have compiled some lemmas which will be used to prove Theorem \ref{theorem1} and Theorem \ref{theorem2}. The first paper discussing approximation by algebraic integers was written by H.\,Davenport and W.M\,Schmidt \cite{DavenportSchmidt}. Recently, their approach has been further developed by Y.\,Bugeaud \cite{Bugeaud03}. In our paper we are going to apply some of his ideas. The main geometric ingredient is Minkowski's theorems from the geometry of numbers.

\begin{lemma}[Minkowski's 2nd theorem on successive minima]\label{lm_Minkowski}
Let $K$ be a bounded central symmetric convex body in $\R^n$ with  successive minima $\tau_1,\ldots,\tau_n$.
Then
\[
\frac{2^n}{n!}\leq\tau_1\tau_2\ldots\tau_nV(K)\leq 2^n.
\]
\end{lemma}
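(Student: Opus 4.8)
The plan is to establish the two inequalities separately. The left-hand one is elementary; the right-hand one is the substantive part — Minkowski's second theorem proper — and will be deduced from Minkowski's first (convex body) theorem. Throughout I write $V_k$ for $k$-dimensional Lebesgue measure, normalise the lattice to $\Z^n$, and recall that one may choose linearly independent lattice points $v_1,\dots,v_n\in\Z^n$ with $\|v_i\|_K=\tau_i$, where $\|x\|_K=\inf\{t>0:x\in tK\}$ (replacing $K$ by its closure if necessary, which changes neither the $\tau_i$ nor $V(K)$).

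For the lower bound, note that $\pm v_i/\tau_i\in K$ for every $i$, so by central symmetry and convexity the cross-polytope $C=\mathrm{conv}\{\pm v_1/\tau_1,\dots,\pm v_n/\tau_n\}$ is contained in $K$. Its volume is
\[
V_n(C)=\frac{2^n}{n!}\,\bigl|\det(v_1,\dots,v_n)\bigr|\,(\tau_1\cdots\tau_n)^{-1},
\]
and $\bigl|\det(v_1,\dots,v_n)\bigr|\ge 1$ since the $v_i$ are linearly independent integer vectors; hence $V(K)\ge\tfrac{2^n}{n!}(\tau_1\cdots\tau_n)^{-1}$, which rearranges to the stated lower bound.

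For the upper bound I would proceed as follows. Choose a basis $e_1,\dots,e_n$ of $\Z^n$ adapted to the flag of the successive minima, i.e. with $\mathrm{span}(e_1,\dots,e_i)=\mathrm{span}(v_1,\dots,v_i)$ for each $i$ (available from the Hermite normal form). One then constructs a linear image $K'$ of $K$ of volume $(\tau_1\cdots\tau_n)^{-1}V(K)$ by contracting, roughly speaking, the $i$-th layer of this flag by the factor $\tau_i^{-1}$, in such a way that Minkowski's first theorem applies to $K'$: that theorem then yields $V(K')\le 2^n$, i.e. exactly $\tau_1\cdots\tau_n V(K)\le 2^n$. The crux, which I expect to be the main obstacle, is to check that $K'$ contains no nonzero point of $\Z^n$ in its interior; the seemingly natural shortcut of inducting on $n$ by slicing off the direction $v_1$ fails, because the central section $K\cap v_1^{\perp}$ and the image of $K$ under projection along $v_1$ can have very different $(n-1)$-volumes, so one really needs Minkowski's careful layer-by-layer estimate here (carried out in full, e.g., in Cassels, \emph{An Introduction to the Geometry of Numbers}). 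Once interior-emptiness is secured, Minkowski's first theorem completes the argument.
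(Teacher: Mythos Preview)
The paper does not supply a proof of this lemma at all: it simply quotes the statement and refers the reader to Cassels, \emph{An Introduction to the Geometry of Numbers}, p.~203, and Gruber--Lekkerkerker, p.~59. Your sketch is essentially the classical argument recorded in those references --- the cross-polytope inclusion for the lower bound, and for the upper bound a flag-adapted rescaling reducing to Minkowski's first theorem --- and you yourself defer the one genuinely hard step (interior-emptiness of the rescaled body) to Cassels. So in effect you and the paper do the same thing: cite Cassels. There is nothing to compare beyond that; your write-up is a correct outline of the standard proof, with the substantive estimate outsourced exactly where the paper outsources it.
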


The best general references here are \cite[pp. 203]{Cassels97}, \cite[pp. 59]{Gruber}.

\begin{lemma}[Bertrand postulate]\label{lm_Bertran}
For any integer $n \ge 2$ there exists a prime $p$ such that $n<p<2n$.
\end{lemma}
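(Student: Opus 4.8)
The plan is to give Erd\H{o}s's elementary argument centered on the central binomial coefficient $\binom{2n}{n}$. First I would record two facts. Since $\binom{2n}{n}$ is the largest of the $2n+1$ summands in $(1+1)^{2n}$, one has $\binom{2n}{n}\ge 4^n/(2n+1)$. Second, the primorial bound $\prod_{p\le x}p\le 4^x$ holds for every real $x\ge 1$; this is standard and follows by induction on $\lfloor x\rfloor$, the essential step being that for $x=2m+1$ the primes in $(m+1,2m+1]$ all divide $\binom{2m+1}{m}\le 2^{2m}$, whence $\prod_{p\le 2m+1}p\le\bigl(\prod_{p\le m+1}p\bigr)\binom{2m+1}{m}\le 4^{m+1}\cdot 4^m$.

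Next I would control the prime factorization of $\binom{2n}{n}$. By Legendre's formula, if $p^{r_p}$ is the exact power of $p$ dividing $\binom{2n}{n}$, then $r_p=\sum_{k\ge1}\bigl(\lfloor 2n/p^k\rfloor-2\lfloor n/p^k\rfloor\bigr)$; each summand is $0$ or $1$ and those with $p^k>2n$ vanish, so $p^{r_p}\le 2n$, and in particular every prime $p>\sqrt{2n}$ divides $\binom{2n}{n}$ at most once. Moreover, for all sufficiently large $n$, a prime $p$ with $2n/3<p\le n$ does not divide $\binom{2n}{n}$ at all: the $k=1$ term is $2-2\cdot1=0$ and the higher terms vanish since $p^2>2n$.

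Now suppose, for contradiction, that no prime lies in the open interval $(n,2n)$; since $2n$ is even and at least $4$, this means no prime lies in $(n,2n]$, and no prime exceeding $2n$ can divide $\binom{2n}{n}$ either. Hence every prime factor of $\binom{2n}{n}$ is at most $2n/3$, and splitting the factorization at $\sqrt{2n}$ (there being fewer than $\sqrt{2n}$ primes below $\sqrt{2n}$, each contributing a factor at most $2n$, and each larger prime factor contributing exactly itself) gives
\[
\frac{4^n}{2n+1}\le\binom{2n}{n}\le(2n)^{\sqrt{2n}}\prod_{p\le 2n/3}p\le(2n)^{\sqrt{2n}}\,4^{2n/3}.
\]
Taking logarithms, $\tfrac13 n\log 4\le\sqrt{2n}\,\log(2n)+\log(2n+1)$, whose right-hand side is $O(\sqrt n\,\log n)$ while the left-hand side grows linearly in $n$; hence this inequality fails for every $n\ge n_0$ with an explicit threshold $n_0$. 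This contradiction proves the lemma for all $n\ge n_0$.

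It remains to treat $2\le n<n_0$, and here I would simply exhibit an increasing list of primes $2,3,5,7,13,23,43,83,163,317,631,\ldots$ in which each prime is less than twice its predecessor and the last one exceeds $n_0$; for any $n$ in this range the first prime in the list that is greater than $n$ then lies strictly between $n$ and $2n$. The only mildly delicate point is bookkeeping the constants in the final logarithmic inequality tightly enough that $n_0$ is small enough to be covered by a short explicit chain — the remainder is routine.
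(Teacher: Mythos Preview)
Your argument is the classical Erd\H{o}s proof of Bertrand's postulate, and the outline is correct: the lower bound on $\binom{2n}{n}$, the primorial estimate $\prod_{p\le x}p\le 4^x$, Legendre's formula to cap each prime-power factor by $2n$, the vanishing of primes in $(2n/3,\,n]$ from the factorization, and the final contradiction with small cases handled by an explicit prime chain are all standard and sound. One small wording point: the claim that primes $2n/3<p\le n$ do not divide $\binom{2n}{n}$ requires $p>2$ and $p^2>2n$, hence $n\ge 5$; you correctly hedge with ``for all sufficiently large $n$'' and absorb the rest into the finite check.

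The paper, however, does not prove this lemma at all: it simply attributes the result to Chebyshev (1850) and refers the reader to a textbook. So your proposal is not merely a different route but an actual self-contained proof where the paper offers only a citation. Erd\H{o}s's argument is the natural choice if one wants an elementary in-text proof; the paper's approach of citing is appropriate given that Bertrand's postulate is a well-known classical fact used here only as a tool. Either is acceptable; yours is more informative but longer than what the paper's exposition requires.
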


Proved by P. Chebyshev in 1850 (see for instance \cite[Theorem 2.4]{Nesterenko}).

\begin{lemma}[Eisenstein criterion]\label{lm_Eisenstein}
Let $P(t)=a_nt^n+\ldots +a_1t+a_0$ denote a polynomial with integer coefficients. If there exists a prime number $p$ such that:
\begin{equation}\label{eq0}
\begin{cases}
a_n\not\equiv 0 \mod{p},\\
a_i\equiv 0 \mod{p},\quad i=0,\ldots, n-1\\
a_0\not\equiv 0 \mod{p^2},
\end{cases}
\end{equation}
then $P$ is irreducible over the rational numbers.
\end{lemma}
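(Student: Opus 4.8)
The plan is to argue by contradiction, combining Gauss's lemma with reduction modulo $p$. Suppose $P$ factors over $\mathbb Q$ as a product of two non-constant polynomials. First I would use Gauss's lemma to produce a factorization $P=QR$ with $Q,R\in\Z[x]$ and $k:=\deg Q\ge 1$, $\deg R=n-k\ge 1$: starting from a rational factorization $P=Q_1R_1$ one replaces $Q_1,R_1$ by primitive integer polynomials $Q_2,R_2$ and collects the rational constant $e/f$ in lowest terms; since $Q_2R_2$ is primitive, comparing contents on both sides of $fP=eQ_2R_2$ forces $f=1$, so the factorization already lives over $\Z$ with the same degrees $k$ and $n-k$.

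Next I would reduce the coefficients modulo $p$ and work in $\mathbb{F}_p[x]$. The hypotheses $p\mid a_i$ for $i<n$ and $p\nmid a_n$ give $\bar P(x)=\bar a_n\,x^n$ with $\bar a_n\ne 0$ in $\mathbb{F}_p$. Because the leading coefficients of $Q$ and $R$ multiply to $a_n$, neither of them is divisible by $p$, so no degree is lost under reduction: $\bar Q\bar R=\bar a_n x^n$ with $\deg\bar Q=k\ge1$ and $\deg\bar R=n-k\ge1$. Since $\mathbb{F}_p[x]$ is a unique factorization domain and the only monic irreducible polynomial dividing $x^n$ is $x$ itself, we must have $\bar Q=\bar b\,x^k$ and $\bar R=\bar c\,x^{n-k}$ for some nonzero constants $\bar b,\bar c\in\mathbb{F}_p$. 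As $k\ge1$ and $n-k\ge1$, the constant terms of both $\bar Q$ and $\bar R$ vanish, that is, $p$ divides the constant terms of $Q$ and of $R$ in $\Z$; hence $p^2$ divides their product $a_0$, contradicting $a_0\not\equiv 0\pmod{p^2}$. Therefore no nontrivial factorization exists and $P$ is irreducible over $\mathbb Q$.

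The routine parts here — the reduction modulo $p$ and the monomial factorization in $\mathbb{F}_p[x]$ — are short and clean. The one step that needs a little care is the first one: the passage from an arbitrary factorization over $\mathbb Q$ to one over $\Z$ of the same degrees, which is precisely where Gauss's lemma (multiplicativity of content / primitivity) enters. Note also that no primitivity assumption on $P$ is required, since a positive content of $P$ is not divisible by $p$ and plays no role in the argument above.
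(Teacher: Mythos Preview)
Your argument is correct and is the standard proof of Eisenstein's criterion: Gauss's lemma reduces a rational factorization to an integer one of the same degrees, reduction modulo $p$ forces both factors to be monomials in $\mathbb{F}_p[x]$, and then $p^2\mid a_0$ gives the contradiction. The paper itself does not supply a proof at all---it simply refers the reader to Eisenstein's original article---so your write-up actually provides more than the paper does; there is nothing to compare beyond noting that you have filled in what the paper leaves as a citation.
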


For a proof see \cite{Eis}.

\begin{lemma}\label{lm_polynomial}
Consider a point $x\in\R$ and a polynomial $P$ with zeros $\alpha_1,\alpha_2,\ldots,\alpha_n$,
where $|x-\alpha_1| = \min\limits_{i} |x-\alpha_i|$. Then
\[
|x-\alpha_1| \le n|P(x)|\cdot|P'(x)|^{-1}.
\]
\end{lemma}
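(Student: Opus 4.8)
The plan is to use the factorization $P(x) = a_n \prod_{i=1}^n (x - \alpha_i)$ and compare $P(x)$ with the single dominant term of the logarithmic derivative. First I would write
\[
\frac{P'(x)}{P(x)} = \sum_{i=1}^n \frac{1}{x - \alpha_i},
\]
valid whenever $x$ is not a zero of $P$ (the cases $x = \alpha_i$ being trivial, since then the claimed bound holds with left-hand side $0$). Taking absolute values and using the triangle inequality together with the hypothesis $|x - \alpha_1| = \min_i |x - \alpha_i|$, each summand satisfies $|x - \alpha_i|^{-1} \le |x - \alpha_1|^{-1}$, so
\[
\left| \frac{P'(x)}{P(x)} \right| \le \sum_{i=1}^n \frac{1}{|x - \alpha_i|} \le \frac{n}{|x - \alpha_1|}.
\]

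Rearranging this inequality gives $|x - \alpha_1| \le n\,|P(x)|\cdot|P'(x)|^{-1}$, which is exactly the claim, provided $P'(x) \neq 0$. So the second step is to dispose of the degenerate cases: if $P(x) = 0$ then $x$ is a root and $|x-\alpha_1| = 0$, so the bound holds trivially (interpreting the right-hand side as $0$ when $P(x)=0$, or simply noting the statement is vacuous when $P'(x)$ also vanishes); if $P'(x) = 0$ but $P(x) \neq 0$, the right-hand side is $+\infty$ and there is nothing to prove. Thus the interesting case is $P(x)P'(x) \neq 0$, which is the one handled by the displayed computation.

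The only genuine subtlety — and the step I would be most careful about — is the passage from the sum $\sum 1/(x-\alpha_i)$ to the bound $n/|x-\alpha_1|$ when the $\alpha_i$ are complex: the terms $1/(x-\alpha_i)$ need not point in the same direction, so a priori cancellation could occur and the sum could be \emph{smaller} than the largest term. But that direction of the inequality is harmless here, since we only need the \emph{upper} bound $|\sum 1/(x-\alpha_i)| \le \sum 1/|x-\alpha_i|$, which is just the triangle inequality and requires no positivity. Hence no difficulty actually arises, and the argument above is complete. (One should also note that although $P$ may have repeated roots, the inequality $|x-\alpha_i|^{-1}\le|x-\alpha_1|^{-1}$ is unaffected, and multiplicities are automatically accounted for by listing each zero with multiplicity in the product $P(x) = a_n\prod_{i=1}^n(x-\alpha_i)$.)
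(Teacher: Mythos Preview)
Your proof is correct and follows essentially the same approach as the paper: both use the logarithmic derivative identity $P'(x)/P(x)=\sum_i (x-\alpha_i)^{-1}$, bound its modulus by $n|x-\alpha_1|^{-1}$ via the triangle inequality and the minimality hypothesis, and then rearrange. Your version is simply more careful about the degenerate cases $P(x)=0$ and $P'(x)=0$, which the paper's proof leaves implicit.
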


\begin{proof}
Considering the polynomial $P$ and its derivative $P'$ at the point $x$ we get
\[
|P'(x)||P(x)|^{-1}\leq\sum\limits_{i=1}^n{|x-\alpha_i|^{-1}}\leq n|x-\alpha_1|^{-1},
\]
which establishes the formula.
\end{proof}

\begin{lemma}[see {\cite{BernikGoetze15}}]\label{lm_BeGo}
Let $I\subset\R$ be the interval of length $|I| = c_{14}\,Q^{-1}$, where $c_{14}>c_0$. Denote by $\mathcal{L}_n^1 = \mathcal{L}^1_n(Q,\delta_0,I)$ the set of points $x\in I$
such that there exists a polynomial $P\in\mathcal{P}_n(Q)$ satisfying the following system of inequalities
\[
\begin{cases}
|P(x)| < Q^{-n},\\
|P'(x)| < \delta_0 Q.
\end{cases}
\]
Then $\mu_1\,\mathcal{L}^1_n < \textstyle\frac14\,|I|$ for $\delta_0 = \delta_0(n)>0$ sufficiently small and $Q>Q_0$.
\end{lemma}

\begin{remark}
It suffices to take $\delta_0(n)=2^{-n-8}n^{-2}$ (see \cite{BernikGoetze15} for more details).
\end{remark}

This lemma is base for the proof of Theorem \ref{theorem1}.

\begin{lemma}[see {\cite{BernikGoetzeGusakova16}}]\label{lm_BeGoGu}
Let $E=I_1\times I_2$ be a rectangle with midpoint $(d_1,d_2)$ and sides $|I_i|=Q^{-s_i}$, $0<s_1+s_2<1$. Given positive $v_1$, $v_2$ satisfying $v_1+v_2=n-1$, let $\mathcal{L}^2_n = \mathcal{L}^2_n(Q,\delta_0,E,v_1,v_2)$ be set of points $(x,y)\in E$, such that there exists a polynomial $P\in\mathcal{P}_n(Q)$ satisfying the following system of inequalities
\begin{equation}\label{eq000}
\begin{cases}
|P(x)| < h_1\,Q^{-v_1},\quad |P(y)| < h_2\,Q^{-v_2},\\
\min\limits_i{\{|P'(x)|,|P'(y)|\}} < \delta_0 Q,
\end{cases}
\end{equation}
where $h_i=\left(\left(|d_i|+1\right)^{n+1}-1\right)|d_i|^{-1}$, $i=1,2$. Then $\mu_2\,\mathcal{L}^2_n < \textstyle\frac14\, \mu_2\,E$ for $\delta_0 = \delta_0(n, d_1, d_2)>0$ sufficiently small and $Q>Q_0$. 
\end{lemma}

\begin{remark}
An easy computations shows that for every point $(x,y)\in E$ and for all polynomials $P\in\mathcal{P}_{n}(Q)$ we have the following estimates:
\[
|P(x)|<h_1\,Q,\quad |P(y)|<h_2\,Q.
\]
Hence the values $v_1$ and $v_2$ lie between $-1$ and $n$.
\end{remark}

\begin{remark}
It it easily seen (for example from Lemma \ref{lm_polynomial}) that for a fixed polynomial $P$ the set of points $(x,y)\in\R^2$ satisfying the system (\ref{eq000}) is contained in a rectangle $\sigma_P=J_1\times J_2$ of measure $\mu_2\sigma_P\leq\frac14\,\mu_2\,E$ (see \cite{BernikGoetzeGusakova16}). If $I_1\subset J_1$ or $I_2\subset J_2$,  we consider the rectangle $I_1 \times J_2$ or $J_1\times I_2$ instead of the rectangle $\sigma_P$ to estimate the measure of $\mathcal{L}^2_n$.
\end{remark}

\section{Proof of Theorem \ref{theorem1}}

Let $\mathcal{L}^1_{n-1} = \mathcal{L}^1_{n-1}(Q,\delta_0,I)$ be the set of $x\in I$ such that there exists a polynomial $P\in\mathcal{P}_{n-1}(Q)$ satisfying the inequalities:
\begin{equation}\label{eq1}
\begin{cases}
|P(x)| < Q^{-n+1},\\
|P'(x)| < \delta_0\,Q.
\end{cases}
\end{equation}
From Lemma \ref{lm_BeGo} it follows that the measure of the set $\mathcal{L}_{n-1}^1$ can be estimated as
\[
\mu\,\mathcal{L}^1_{n-1}\leq\frac14\,|I|,
\]
for $Q>Q_0$ and $\delta_0<2^{-n-7}(n-1)^{-2}$.

Let us consider the set $B^1=I\backslash \mathcal{L}^1_{n-1}$. Since for any $x\in I$ there exists a polynomial $P\in\mathcal{P}_{n-1}(Q)$ satisfying $|P(x)| < Q^{-n+1}$ we conclude that for any $x_0\in B^1$ and the polynomial $P\in\mathcal{P}_{n-1}(Q)$, the system of inequalities
\[
\begin{cases}
|P(x_0)| < Q^{-n+1},\\
|P'(x_0)| \ge \delta_0\,Q,
\end{cases}
\]
is satisfied and $\mu_1\,B^1 \ge \frac34\,|I|$.

Consider an arbitrary point $x_0\in B^1$ and examine successive minima
$\tau_1,\ldots,\tau_n$ of the compact convex set $K$ defined by inequalities
\begin{equation}\label{eq2}
\begin{cases}
|a_{n-1}x_0^{n-1}+\ldots+a_1x_0+a_0|\leq Q^{-n+1},\\
|(n-1)a_{n-1}x_0^{n-2}+\ldots+2a_2x_0+a_1| \leq Q,\\
|a_{n-1}|,\ldots,|a_2|\leq Q.
\end{cases}
\end{equation}
Let $\tau_1\leq\delta_0$. Then for $\delta_0$ sufficiently small there exists a non-zero polynomial $P_0\in\mathcal{P}_{n-1}(Q)$ satisfying the inequalities
\[
\begin{cases}
|P_0(x_0)| \leq\delta_0\,Q^{-n+1} < Q^{-n+1},\\
|P_0'(x_0)| \leq \delta_0\,Q,\\
H(P_0) \leq Q.
\end{cases}
\]
This contradicts the fact that $x_0\in B^1=I\backslash \mathcal{L}^1_{n-1}$, following us to conclude that $\tau_{n-1}\ge\ldots\ge\tau_1>\delta_0$. Since the volume $V(K)$ of the compact convex set $K$ is equal to $2^n$, we deduce, from Lemma \ref{lm_Minkowski}, that $\tau_1\ldots\tau_n\leq 1$ and, hence, that $\tau_{n}\leq \delta_0^{-n+1}$. Therefore we can choose $n$ linearly independent polynomials with integer coefficients $P_i(t)=a_{i,n-1}t^{n-1}+\ldots+a_{i,1}t+a_{i,0}$, $1\leq i\leq n$ satisfying the system of inequalities
\begin{equation}\label{eq3}
\begin{cases}
|P_i(x_0)|\leq \delta_0^{-n+1}\,Q^{-n+1},\\
|P_i'(x_0)| \leq \delta_0^{-n+1}\,Q,\\
|a_{i,j}|\leq \delta_0^{-n+1}\,Q,\quad 2\leq j\leq n-1.
\end{cases}
\end{equation}
Applying  well-known estimates from the geometry of numbers (see \cite[pp. 219]{Cassels97}) for the polynomials $P_i$ we obtain the inequality
\[
\Delta = \det|(a_{i,j-1})^n_{i,j=1}|\leq n!.
\]
Moreover, from Lemma \ref{lm_Bertran} it follows that there exists a prime $p$, which doesn't divide  $\Delta$ and satisfies
\begin{equation}\label{eq4}
n!<p< 2n!.
\end{equation}

Our next goal is to construct the irreducible monic polynomial of degree $n$ using polynomials $P_i$. Consider the following system of linear equations in $n$ variables  $\theta_1,\ldots,\theta_n$:
\begin{equation}\label{eq5}
\begin{cases}
x_0^n+p\,\sum\limits_{i=1}^{n}{\theta_i P_i(x_0)}=p(n+1)\delta_0^{-n+1}\,Q^{-n+1},\\
nx_0^{n-1}+p\,\sum\limits_{i=1}^{n}{\theta_i P_i'(x_0)}=p\,Q+ p\,\sum\limits_{i=1}^{n}{|P_i'(x_0)|},\\
\sum\limits_{i=1}^{n}{\theta_i a_{i,j}}=0,\quad 2\leq j\leq n-1.
\end{cases}
\end{equation}

In order to find the determinant $\hat{\Delta}$ of this system, it is convenient to transform it as follows. Multiply the $k$-th equation, where $k=3, \ldots, n$, by $p\cdot x_0^{k-1}$ and subtract it from the first equation of the system (\ref{eq5}). Similarly, multiply the $k$-th equation, where $k=3,\ldots, n$, by $p\cdot (k-1)x_0^{k-2}$ and subtract it from the second equation. After making these transformations the determinant $\hat{\Delta}$ may be written as
\[
\hat{\Delta}=p^2\cdot
\begin{vmatrix}
a_{1,1}x_0+a_{1,0} & \dots & a_{n,1}x_0+a_{n,0} \\
a_{1,1} & \dots &  a_{n,1} \\
\vdots & \ddots & \vdots \\
a_{1,n-1} & \dots & a_{n,n-1}
\end{vmatrix}
\]
Since the polynomials $P_i$ are linearly independent, we conclude that $\hat{\Delta}=p^2\Delta\neq 0$. Hence, there exists a unique solution
$(\theta_1,\ldots,\theta_n)$ of the system (\ref{eq5}). 

For integers $k_1,\ldots,k_n$ consider the following construction, which is a polynomial of degree $n$ with integer coefficients:
\[
P(t)=t^n+p\cdot\sum\limits_{i=1}^n{k_iP_i(t)}=t^n+p\cdot(a_{n-1}t^{n-1}+\ldots+a_1t+a_0),
\]
where $a_j=\sum\limits_{i=1}^{n}{k_ia_{i,j}}$, $0\leq j\leq n-1$ and $k_i$, $1\leq i\leq n$ satisfies
\begin{equation}\label{eq6}
|\theta_i - k_i|\leq 1.
\end{equation}
 
 We next show that there exists a suitable combinations of the coefficients $k_i$ such that the polynomial $P$ is irreducible. From inequality (\ref{eq6}) we have two possible values for every $k_i$, which will be denoted by $k_i^1$ and $k_i^2=k_i^1+1$. Therefore, by Lemma \ref{lm_Eisenstein}, it suffices to show that we can choose $k_i$ such that all $a_j$ satisfy (\ref{eq0}). It is easily seen that the first and the second conditions of (\ref{eq0}) hold for any $k_i$. It remains to show that $a_0=k_1a_{1,0}+\ldots+k_na_{n,0}$ isn't divisible by $p$. Since $p$ doesn't divide $\Delta$, there exists a number $1\leq i\leq n$ such that $a_{i,0}$ is not divisible by $p$ and hence either $a_0^1=k_1a_{1,0}+\ldots+a_{i,0}k_i^1+\ldots+a_{n,0}k_n$ or $a_0^2=k_1a_{1,0}+\ldots+a_{i,0}k_i^2+\ldots+a_{n,0}k_n$ is also not divisible by $p$. Therefore, choosing $k_i$ in this manner yields an irreducible polynomial $P$.

We now proceed to estimate $|P(x_0)|$, $|P'(x_0)|$ and $H(P)$. Combining (\ref{eq3}) and (\ref{eq6}) with the system of equations (\ref{eq5}) we obtain the following inequalities.

From the first equation of the system it follows that
\begin{equation}\label{eq7}
p\delta_0^{-n+1}\,Q^{-n+1}\leq|P(x_0)|\leq p(2n+1)\delta_0^{-n+1}\,Q^{-n+1}.
\end{equation}
Similarly, from the second equation of the system we have
\begin{equation}\label{eq8}
p\,Q\leq |P'(x_0)|\leq (p+2pn\delta_0^{-n+1})\,Q,
\end{equation}
and the remaining equations of the system give
\begin{equation}\label{eq9}
|a_j|\leq n\delta_0^{-n+1}\,Q,\quad 2\leq j\leq n-1.
\end{equation}
Finally, applying (\ref{eq7})---(\ref{eq9}) and the inequality $|x_0|\leq \frac12$ yields the following estimates for the coefficients $a_1$ and $a_0$:
\begin{multline}\label{eq10}
|a_1|\leq |P'(x_0)|+n|x_0|^{n-1}+\sum\limits_{j=2}^{n-1}{j|x_0|^{j-1}|a_j|}\leq(p+2pn\delta_0^{-n+1})\,Q\\
  +\left(n\delta_0^{-n+1}\sum\limits_{k=1}^{n-1}\textstyle\frac{k+1}{2^{k}}\right)\,Q\leq\left(p+\left(2pn+3n\right)\delta_0^{-n+1}\right)\,Q,
\end{multline}
\begin{multline}\label{eq11}
|a_0|\leq |P(x_0)|+|x_0|^n+|a_1x_0|+\sum\limits_{j=2}^{n}{|x_0|^j|a_j|}\leq \textstyle\frac12p\,Q\\
+\left(\textstyle\frac12p+\left(pn+\textstyle\frac32n\right)\delta_0^{-n+1}\right)\,Q+\textstyle\frac12n\delta_0^{-n+1}\,Q\leq\left(p+\left(pn+4n\right)\delta_0^{-n+1}\right)\,Q. 
\end{multline}
From the estimates (\ref{eq9})---(\ref{eq11}) and the inequality (\ref{eq4}) we conclude that
\begin{equation}\label{eq12}
H(P)\leq 2n!\left(2n\delta_0^{-n+1}+1\right)\,Q=Q_1.
\end{equation}

Consider the roots $\alpha_1,\ldots,\alpha_n$ of the polynomial $P$, where $|x_0-\alpha_1|=\min\limits_{i}{|x_0-\alpha_i|}$. In view of Lemma \ref{lm_polynomial}, the following estimate holds
\begin{equation}\label{eq12_0}
|x_0-\alpha_1|\leq n|P(x_0)||P'(x_0)|^{-1}.
\end{equation}
Substituting inequalities (\ref{eq7}) and (\ref{eq8}) into (\ref{eq12_0}) we obtain
\begin{equation}\label{eq13}
|x_0-\alpha_1|\leq n(2n+1)\delta_0^{-n+1}\,Q^{-n}=c_{15}\,Q^{-n}.
\end{equation}

If $\alpha_1$ is a complex root of the polynomial $P$, then its conjugate is also a root of the polynomial $P$. Hence, by (\ref{eq12}), (\ref{eq13}) and well-known estimates for the roots of the polynomial $P$, namely $|\alpha_i|\leq H(P)+1$, $1\leq i\leq n$ (see \cite[Theorem 1.1.2]{Praslov}), we deduce that
\[
|P(x_0)|=\prod\limits_{i=1}^n|x_0-\alpha_i|\leq c_{15}^2Q^{-2n}\cdot\left(2+2n!\left(2n\delta_0^{-n+1}+1\right)\,Q\right)^{n-2}.
\]
This inequality contradicts (\ref{eq7}) for $Q>Q_0$. Thus, $\alpha_1$ is real.

Finally, take a maximal system of real algebraic integers $\Gamma=\left\{\gamma_1,\ldots,\gamma_m\right\}$ such that $|\gamma_i-\gamma_j|>c_{15}\,Q^{-n}$, $1\leq i\neq j\leq m$. Let us show that for any point $x_0\in B^1$ there exists an algebraic number $\gamma \in \Gamma$ such that $|x_0-\gamma|\leq 2c_{15}\,Q^{-n}$. According to the above arguments and (\ref{eq13}) for any point $x_0\in B^1$ there exists a real algebraic integer $\alpha_1\in I$ such that $|x_0-\alpha_1|\leq c_{15}\,Q^{-n}$. If $\alpha_1\in\Gamma$, then we can take $\gamma=\alpha_1$, otherwise, there exists $\gamma_i\in\Gamma$ such that $|\alpha_1-\gamma_i|\leq c_{15}Q^{-n}$ and, hence,
\[
|x_0-\gamma_i|\leq |x_0-\alpha_1|+|\alpha_1-\gamma_i|\leq 2 c_{15}Q^{-n}.
\]
In this case, we can take $\gamma=\gamma_i$. Therefore, $B^1$ is contained in a union $\bigcup\limits_{i=1}^m\left\{x\in I:|x-\gamma_i|\leq 2 c_{15}\,Q^{-n}\right\}$ and
\[
4 mc_{15}\,Q^{-n}\ge\mu_1\,\left(\bigcup\limits_{i=1}^m\left\{x\in I:|x-\gamma_i|\leq 2 c_{15}\,Q^{-n}\right\}\right)\ge \mu_1\,B^1\ge\textstyle\frac34\,|I|.
\]
This inequality implies that the number of algebraic integers $\alpha\in I$, $\deg\alpha=n$, $H(\alpha)\leq Q_1$ is no smaller then
\[
m>\textstyle\frac{3}{16}c_{15}^{-1}\,Q^n|I|=\textstyle\frac{3}{16}c_{15}^{-1}\left(2n!\left(2n\delta_0^{-n+1}+1\right)\right)^{-1}\,Q_1^n|I|=c_4\,Q_1^n|I|
\]
for $Q_1>Q_0$ and the proof is complete.

From the proof of Theorem \ref{theorem1} it follows, that the set of algebraic integers of degree $n$ forms a regular system with respect to the function $N(\alpha)=\left(\frac{H(\alpha)}{\left(1+|\alpha|\right)^{n-1}}\right)^{n}$ and $T_0=c_{16}|I|^{-n}$, where the constant $c_{16}$ independent of $|I|$.

\section{Proof of Theorem \ref{theorem2}}

The proof of Theorem \ref{theorem2} follows by the same method as the proof of Theorem \ref{theorem1}, but it contains some non-trivial elements which require special attention.

The proof of Theorem \ref{theorem2} apply Lemma \ref{lm_BeGoGu}, which is two-dimensional analogue of Lemma \ref{lm_BeGo}. Given positive $v_1$ and $v_2$ satisfying the condition $v_1+v_2=n-2$, let us consider a system of inequalities
\begin{equation}\label{eq14}
\begin{cases}
|P(x)| < \hat{h}_1\,Q^{-v_1},\quad |P(y)| <  \hat{h}_2\,Q^{-v_2},\\
\min\limits{\{|P'(x)|,|P'(y)|\}} < \delta_0\, Q,
\end{cases}
\end{equation}
where $\hat{h}_i=\max\left\{\left(\left(|d_i|+1\right)^n-1\right)|d_i|^{-1}, \frac14|d_1-d_2|^{-2}\right\}$, $i=1,2$. Lemma \ref{lm_BeGoGu} implies that the measure of the set $\mathcal{L}^2_{n-1}=\mathcal{L}^2_{n-1}(Q,\delta_0,E,v_1,v_2)$ of points $(x,y)\in E$ such that there exists a polynomial $P\in\mathcal{P}_{n-1}(Q)$ satisfying (\ref{eq14}) can be estimated as
\[
\mu_2\, \mathcal{L}^2_{n-1}\leq\textstyle\frac14\, \mu_2\,E
\]
for $Q>Q_0$ and $\delta_0$ sufficiently small.

It is easy to check using for example Minkowski's theorem on linear forms \cite[pp. 73]{Cassels97}, that for any point $(x,y)\in E$ there exists a polynomial $P\in\mathcal{P}_{n-1}(Q)$ satisfying $|P(x)| < \hat{h}_1\,Q^{-v_1}$ and $|P(y)| < \hat{h}_2\,Q^{-v_2}$. From this it follows that for any point $(x,y)\in B^2=E\backslash \mathcal{L}^2_{n-1}$ we may choose a polynomial $P\in\mathcal{P}_{n-1}(Q)$ such that the system
\[
\begin{cases}
|P(x)| < \hat{h}_1\,Q^{-v_1},\quad |P(y)| < \hat{h}_2\,Q^{-v_2},\\
|P'(x)|\ge \delta_0\,Q,\quad |P'(y)|\ge \delta_0\,Q,
\end{cases}
\]
holds and $\mu_2\, B^2 \ge \frac34\,\mu_2\,E$.

As in the proof of Theorem \ref{theorem1} consider an arbitrary point $(x_{0},y_{0})\in B^2$ and examine the successive minima $\tau_1,\ldots,\tau_n$ of the compact convex set $K$ defined by
\[
\begin{cases}
\left|a_{n-1}x_{0}^{n-1}+\ldots+a_1x_{0}+a_0\right|\leq  \hat{h}_1\,Q^{-v_1},\\
\left|a_{n-1}y_{0}^{n-1}+\ldots+a_1y_{0}+a_0\right|\leq  \hat{h}_2\,Q^{-v_2},\\
\left|(n-1)a_{n-1}x_{0}^{n-2}+\ldots+2a_2x_{0}+a_1\right| \leq Q,\\
\left|(n-1)a_{n-1}y_{0}^{n-2}+\ldots+2a_2y_{0}+a_1\right| \leq Q,\\
|a_i|\leq Q,\quad 4\leq i\leq n-1.
\end{cases}
\]
Assume $\tau_1\leq\delta_0$. Then for $\delta_0$ sufficiently small there exists a polynomial $P_0\in\mathcal{P}_{n-1}(Q)$ satisfying the inequalities
\[
\begin{cases}
|P_0(x_{0})| < \delta_0\hat{h}_1\,Q^{-v_1}<\hat{h}_1\,Q^{-v_1},\quad |P_0(y_{0})| < \delta_0\hat{h}_2\,Q^{-v_2}<\hat{h}_2\,Q^{-v_2},\\
|P_0'(x_{0})|<\delta_0\,Q,\quad |P_0'(y_{0})|<\delta_0\,Q,\\
H(P_0) < Q.
\end{cases}
\]
contrary to the fact that $(x_{0},y_{0})\in B^2$. Thus, $\tau_1>\delta_0$. This fact and estimate $V(K)>2^n$ allows us to use Lemma \ref{lm_Minkowski}, namely inequality $\tau_1\ldots\tau_n\leq 1$, to conclude that $\tau_{n}\leq \delta_0^{-n+1}$. Hence, there exist $n$ linearly independent
polynomials with integer coefficients $P_i(t)=a_{i,n-1}t^{n-1}+\ldots+a_{i,1}t+a_{i,0}$, $1\leq i\leq n$ satisfying the inequalities
\begin{equation}\label{eq15}
\begin{cases}
|P_i(x_{0})|\leq \delta_0^{-n+1}\hat{h}_1\,Q^{-v_1},\quad |P_i(y_{0})|\leq \delta_0^{-n+1}\hat{h}_2\,Q^{-v_2}\\
|P_i'(x_{0})| \leq \delta_0^{-n+1}\,Q,\quad |P_i'(y_{0})| \leq \delta_0^{-n+1}\,Q,\\
|a_{i,j}|\leq \delta_0^{-n+1}\,Q,\quad 4\leq j\leq n-1.
\end{cases}
\end{equation}
Analysis similar to that in the proof of Theorem \ref{theorem1} shows that there exists a prime $p$ which doesn't divide $\Delta=\det|(a_{i,j-1})^n_{i,j=1}|$ and satisfies
\begin{equation}\label{eq16}
n! <p< 2n!.
\end{equation}

Next, let us consider a system of linear equations in $n$ variables $\theta_1,\ldots,\theta_n$
\begin{equation}\label{eq17}
\begin{cases}
x_{0}^n+p\,\sum\limits_{i=1}^{n}{\theta_i P_i(x_{0})}=p(n+1)\delta_0^{-n+1}\hat{h}_1\,Q^{-v_1},\\
y_{0}^n+p\,\sum\limits_{i=1}^{n}{\theta_i P_i(y_{0})}=p(n+1)\delta_0^{-n+1}\hat{h}_2\,Q^{-v_2},\\
nx_{0}^{n-1}+p\,\sum\limits_{i=1}^{n}{\theta_i P_i'(x_{0})}=pQ+ p\,\sum\limits_{i=1}^{n}{|P_i'(x_{0})|},\\
ny_{0}^{n-1}+p\,\sum\limits_{i=1}^{n}{\theta_i P_i'(y_{0})}=pQ+ p\,\sum\limits_{i=1}^{n}{|P_i'(y_{0})|},\\
\sum\limits_{i=1}^{n}{\theta_i a_{i,j}}=0,\quad 4\leq j\leq n-1.
\end{cases}
\end{equation}
Our goal is to show that the determinant $\hat{\Delta}$ of this system is not vanish. Let us transform the system (\ref{eq17}) as follows.
Multiply the $k$-th equation, where $k=5, 6, \ldots, n$, by $p\cdot x_{0}^{k-1}$ ( respectively by $p \cdot y_{0}^{k-1}$) and subtract it from the first (respectively the second) equation of the system (\ref{eq17}). Similarly, multiply the $k$-th equation, where $k=5, 6, \ldots, n$, by $p\cdot (k-1)x_{0}^{k-2}$ (respectively  by $p \cdot (k-1)y_{0}^{k-2}$) and subtract it from the third (respectively the fourth) equation. After these transformations the determinant of system (\ref{eq17}) may be written as
\[
p^4\cdot
\begin{vmatrix}
a_{1,3}x_{0}^3+a_{1,2}x_0^2+a_{1,1}x_0+a_{1,0} & \dots & a_{n,3}x_{0}^3+a_{n,2}x_0^2+a_{n,1}x_0+a_{n,0}\\
a_{1,3}y_{0}^3+a_{1,2}y_0^2+a_{1,1}y_0+a_{1,0} & \dots & a_{n,3}y_{0}^3+a_{n,2}y_0^2+a_{n,1}y_0+a_{n,0}\\
3a_{1,3}x_{0}^2+2a_{1,2}x_0+a_{1,1} & \dots & 3a_{n,3}x_{0}^2+2a_{n,2}x_0+a_{n,1}\\
3a_{1,3}y_{0}^2+2a_{1,2}y_0+a_{1,1} & \dots & 3a_{n,3}y_{0}^2+2a_{n,2}y_0+a_{n,1}\\
a_{1,4} & \dots & a_{n,4}\\
\vdots & \ddots & \vdots \\
a_{1,n-1} & \dots & a_{n,n-1}
\end{vmatrix}
\]
We proceed to show that $\hat{\Delta}$ is equal to $\Delta$ up to a multiple depending only on $x_0$, $y_0$ and $p$. Multiply the third (respectively the fourth) row by $\frac13 x_{0}$ (respectively by $\frac13 y_{0}$) and subtract it from the first (respectively the second) row. Then subtracting the first (respectively the third) row from the second (respectively the fourth) row gives:
\[
\hat{\Delta}=\textstyle\frac{p^4(y_{0}-x_{0})^2}{9}\cdot\begin{vmatrix}
a_{1,2}x_{0}^2+2a_{1,1}x_{0}+3a_{1,0} & \dots & a_{n,2}x_{0}^2+2a_{n,1}x_{0}+3a_{n,0} \\
a_{1,2}(y_{0}+x_{0})+2a_{1,1} & \dots & a_{n,2}(y_{0}+x_{0})+2a_{n,1} \\
3a_{1,3}x_{0}^2+2a_{1,2}x_{0}+a_{1,1} & \dots & 3a_{n,3}x_{0}^2+2a_{n,2}x_0+a_{n,1} \\
3a_{1,3}(y_{0}+x_{0})+2a_{1,2}& \dots & 3a_{n,3}(y_{0}+x_{0})+2a_{n,2} \\
a_{1,4} & \dots & a_{n,4}\\
\vdots & \ddots & \vdots \\
a_{1,n-1} & \dots & a_{n,n-1}
\end{vmatrix}.
\]
Now let us subtract the second row multiplied by $x_{0}$ from the first row and the fourth row multiplied by $\frac12$ from the third row. Then subtract the third row multiplied by $\frac{y_{0}+x_{0}}{x_{0}^2}$ from the fourth row, and finally subtract the fourth row multiplied by $x_{0}y_{0}$, $y_{0}+x_{0}$ and $\frac32x_{0}-\frac12y_{0}$ from the first, the second and the third row respectively. Consequently we obtain the inequality
\[
\hat{\Delta}=p^4(y_{0}-x_{0})^4\cdot
\begin{vmatrix}
a_{1,0} & \dots & a_{n,0}\\
\vdots & \ddots & \vdots \\
a_{1,n-1} & \dots & a_{n,n-1}
\end{vmatrix}
=p^4(y_{0}-x_{0})^4\Delta >0,
\]
becouse the polynomials $P_i$, $1\leq i\leq n$ are linearly independent and $|y_{0}-x_{0}|>\varepsilon>0$.
Hence, the system (\ref{eq17}) has a unique solution $(\theta_1,\ldots,\theta_n)$. Moreover, there exist integers $k_1,\ldots,k_n$ satisfying
\begin{equation}\label{eq19}
|\theta_i - t_i|\leq 1,\quad i=1,\ldots,n,
\end{equation}
such that the following polynomial with integer coefficients:
\[
P(t)=t^n+p\cdot\sum\limits_{i=1}^n{k_iP_i(t)}=t^n+p\cdot(a_{n-1}t^{n-1}+\ldots+a_1t+a_0),
\]
where $a_j=\sum\limits_{i=1}^{n}{k_ia_{i,j}}$, $0\leq j\leq n-1$ is irreducible. This follows by the same arguments as in the previouse section.

Let us estimate the values $|P(x_{0})|$, $|P(y_{0})|$, $|P'(x_{0})|$ and $|P'(y_{0})|$. From inequalities (\ref{eq15}), 
(\ref{eq19}) and the first four equations of the system (\ref{eq17}) we see that:
\begin{align}
p\delta_0^{-n+1}\hat{h}_1\,Q^{-v_1}&\leq|P(x_{0})|\leq p(2n+1)\delta_0^{-n+1}\hat{h}_1\,Q^{-v_1}, \label{eq20}\\ 
p\delta_0^{-n+1}\hat{h}_2\,Q^{-v_2}&\leq|P(y_{0})|\leq p(2n+1)\delta_0^{-n+1}\hat{h}_2\,Q^{-v_2}, \label{eq21}
\end{align}
\begin{align}
p\,Q\leq |P'(x_{0})|\leq \left(p+2pn\delta_0^{-n+1}\right)\,Q, \label{eq22}\\
p\,Q\leq |P'(y_{0})|\leq \left(p+2pn\delta_0^{-n+1}\right)\,Q. \label{eq23}
\end{align}
Finally, we need to estimate the height $H(P)$.
By the fourth to  $n$-th equations of the system (\ref{eq17}) and inequalities (\ref{eq15}), (\ref{eq19}), we have
\begin{equation}\label{eq24}
|a_j|\leq n\delta_0^{-n+1}\,Q,\quad 4\leq j\leq n-1.
\end{equation}
The only point remaining concerns the estimation of $|a_j|$, $0\leq j\leq 3$. By (\ref{eq20}) -- (\ref{eq24}) and the inequalities $|x_{0}|\leq |d_1|+\frac12$, $|y_{0}|\leq |d_2|+\frac12$, for $Q>Q_0$ we have
\begin{multline*}
\left|a_3x_{0}^3+a_2x_{0}^2+a_1x_{0}+a_0\right|\leq |P(x_{0})|+\sum\limits_{j=4}^{n-1}{|x_0|^j|a_j|}+|x_0|^n \\
 <3pn\delta_0^{-n+1}\hat{h}_1\,Q^{-v_1}+\left(n\delta_0^{-n+1}\sum\limits_{j=4}^n{\left(|d_1|+\textstyle\frac12\right)^j}\right)\,Q<4pn\delta_0^{-n+1}\hat{h}_1\,Q,
\end{multline*}
and, similarly, 
\begin{equation*}
\left|a_3y_{0}^3+a_2y_{0}^2+a_1y_{0}+a_0\right|<4pn\delta_0^{-n+1}\hat{h}_2\,Q.
\end{equation*}
Then
\begin{multline*}
|3a_3x_{0}^2+2a_2x_{0}+a_1|\leq |P'(x_{0})|+\sum\limits_{j=4}^{n-1}{j|x_0|^{j-1}|a_j|}+n|x_0|^{n-1}\\
<\left(p+2pn\delta_0^{-n+1}\right)\,Q+\left(n\delta_0^{-n+1}\sum\limits_{j=4}^n{j\left(|d_1|+\textstyle\frac12\right)^{j-1}}\right)\,Q\\
<\left(p+2pn\delta_0^{-n+1}+n^2\hat{h}_1\delta_0^{-n+1}\right)\,Q,
\end{multline*}
and, similarly, 
\begin{equation*}
|3a_3y_{0}^2+2a_2y_{0}+a_1|\leq\left(p+2pn\delta_0^{-n+1}+n^2\hat{h}_2\delta_0^{-n+1}\right)\,Q.
\end{equation*}
We emphasize that in order to simplify equations we do not care about the accuracy of the constants. Consider the following system of linear equations for $a_0$, $a_1$, $a_2$ and $a_3$:
\begin{equation}\label{eq26}
\begin{cases}
a_3x_{0}^3+a_2x_{0}^2+a_1x_{0}+a_0=l_{1},\\
a_3y_{0}^3+a_2y_{0}^2+a_1y_{0}+a_0=l_{2},\\
3a_3x_{0}^2+2a_2x_{0}+a_1=l_{3},\\
3a_3y_{0}^2+2a_2y_{0}+a_1=l_{4}.
\end{cases}
\end{equation}

According to the above computations the determinant of the system (\ref{eq26}) does not vanish. Thus, the system has a unique solution, which may be found by using Cramer's rule. Combining this with estimates above one can easily verify:
\[
|a_j|<c_{17}n\delta_0^{-n+1}\,Q,\quad 0\leq j\leq 3,
\]
where $c_{17}=2^8p\varepsilon^{-3}\left(\hat{h}_1+\hat{h}_2\right)\left(\max\{|d_1|,|d_2|\}\right)^3$. Applying (\ref{eq16}) and (\ref{eq24}) now yields the following estimate:
\[
H(P)<c_{18}n\delta_0^{-n+1}\,Q=Q_1,
\]
where $c_{18}=\max\{1,c_{17}\}$.

Consider the roots $\alpha_1,\ldots,\alpha_n$ of the polynomial $P$, where $|x_{0}-\alpha_1|=\min\limits_{i}{|x_{0}-\alpha_i|}$ and let $\beta_1,\ldots,\beta_n$ be a permutation of these roots such that $|y_{0}-\beta_1|=\min\limits_{i}{|y_{0}-\beta_i|}$. By Lemma \ref{lm_polynomial} and estimates (\ref{eq20}) -- (\ref{eq22}), we have
\[
\begin{cases}
|x_{0}-\alpha_1|< n(2n+1)\delta_0^{-n+1}\hat{h}_1\,Q^{-v_1-1}=c_{19}\hat{h}_1\,Q^{-v_1-1},\\
|y_{0}-\beta_1|< n(2n+1)\delta_0^{-n+1}\hat{h}_2\,Q^{-v_2-1}=c_{19}\hat{h}_2\,Q^{-v_2-1}.
\end{cases}
\]
For $Q>Q_0$, the roots $\alpha_1$ and $\beta_1$ are real, as is easy to check.

Let $\Gamma=\left\{(\alpha_1, \beta_1),\ldots,(\alpha_m, \beta_m)\right\}$ be a maximal system of real algebraic integer points such that
\[
|\alpha_i-\alpha_j|>c_{19}\hat{h}_1\,Q^{-v_1-1}\quad \text{or}\quad |\beta_i-\beta_j|>c_{19}\hat{h}_2\,Q^{-v_2-1},\qquad 1\leq i\neq j\leq m.
\]
This follows by the same method as in the previous section that for any point $(x_{0},y_{0})\in B^2$ there exists an algebraic integer point $(\alpha_i,\beta_i)\in\Gamma$ satisfying
\[
|x_{0}-\alpha_i|< 2c_{19}\hat{h}_1\,Q^{-v_1-1},\quad |y_{0}-\beta_i|< 2c_{19}\hat{h}_2\,Q^{-v_2-1}.
\]
This implies the following covering:
\[
B^2\subset\bigcup_{i=1}^m\left\{(x,y)\in E:|x-\alpha_i|< 2c_{19}\hat{h}_1\,Q^{-v_1-1}, |y-\beta_i|< 2c_{19}\hat{h}_2\,Q^{-v_2-1}\right\},
\]
where
\[
m>\textstyle\frac{3}{64}\cdot c_{19}^{-2}\hat{h}_1^{-1}\hat{h}_2^{-1}\,Q^n\mu_2\,E=c_7\,Q_1^n\mu_2\,E,
\]
which finishes the proof.

\subsection*{Acknowledgments}

The authors wish to express their thanks to Prof. V. Bernik for suggesting the problem and for numerous enlightening conversations.

This research was partly supported by SFB-701.

\newpage

\newpage
\noindent Anna Gusakova,\\
Faculty of Mathematics, University of Bielefeld, \\
PO Box 100131, 33501 Bielefeld, Germany\\
E-mail: agusakov@math.uni-bielefeld.de\\
\\
Friedrich G{\"o}tze,\\
Faculty of Mathematics, University of Bielefeld, \\
PO Box 100131, 33501 Bielefeld, Germany\\
E-mail: goetze@math.uni-bielefeld.de
\end{document}